\newenvironment{proof}{\noindent\bf Proof.\rm}{\hfill $\mbox{\boldmath{$\Box$}}$}
\newcommand{\rr}{\vspace*{2,5mm}}
\newtheorem{defi}{\bf Definition}[section]
\newtheorem{lem}{\bf Lemma}[section]
\newtheorem{theo}{\bf Theorem}[section]
\newtheorem{rem}{\bf Remark}[section]
\newtheorem{exa}{\bf Example}[section]
\newtheorem{exas}{\bf Examples}[section]
\newcommand{\B}{{\cal B}}
\newcommand{\T}{{\cal T}}
\newcommand{\im}{\!\to\!}
\newcommand{\cla}{\ {\scriptstyle \!\supset\!}\ }
\newcommand{\zucc}{\!\succ}
\newcommand{\luk}{\ \!{\scriptstyle > \!\!\to }\ \!}
\title{\large \bf On Generalized $\mbox{\boldmath{$I$}}$--Algebras and 4--valued Modal  Algebras}
\author{Aldo V. Figallo and Paolo Landini}
\date{}
\begin{document}

\maketitle

\begin{center}
Universidad Nacional del Sur, Bah\'{\i}a Blanca, Argentina.
\end{center}
\begin{center}
Universidad Nacional de  San Juan, San Juan , Argentina.
\end{center}

\thispagestyle{empty}

\begin{abstract}
In this paper we establish a new characterization of $4$--valued modal algebras considered by A. Monteiro. In order to obtain this characterization we introduce a new class of algebras named generalized $I$--algebras. This class contains strictly the class of C--algebras defined by Y. Komori as an algebraic counterpart of the infinite--valued implicative {\L}ukasiewicz propositional calculus. On the other hand, the relationship between $I$--algebras and conmutative BCK--algebras, defined by  S. Tanaka in 1975, allows us to say that in a certain sense G--algebras are also a generalization of these latter algebras.
\end{abstract}

\section{\large \bf Introduction}
\vspace*{3mm}

Y. Arai, K. Iseki and S. Tanaka \cite{A.I.T} (see also \cite{YI.KI,KI,KI.ST1,KI.ST2}) defined the class of BCK--algebras as algebras $\langle A,\ast ,0\rangle$ of type $(2,0)$ which satisfies:

\begin{itemize}
\item[(A1)] $((x\ast y)\ast (x\ast z))\ast (z\ast y)=0$,
\item[(A2)] $(x\ast (x\ast y))\ast y=0$,
\item[(A3)] $x\ast x=0$,
\item[(A4)] $0\ast x=0$,
\item[(A5)] $x\ast y=0, \, y\ast x=0$  imply  $x=y$.
\end{itemize}

From A1,$\ldots$,A5 it follows  

\begin{itemize}
\item[(A6)] The relation $x\leq y$ if and only if $x\ast y=0$ is an order on $A$.
\end{itemize}

S. Tanaka \cite{ST}, considered the subclass of conmutative BCK--algebras (or CBCK--algebras) and H. Yutani \cite{HY} proved that these are an equational class of algebras charaterized by the following identities:

\begin{itemize}
\item[(B1)] $(x\ast y)\ast z=(x\ast z)\ast y$,
\item[(B2)] $x\ast (x\ast y)=y\ast (y\ast x)$,
\item[(B3)] $x\ast x=0$,
\item[(B4)] $x\ast 0=0$.
\end{itemize}

S. Tanaka \cite{ST} also proved that every CBCK--algebra is a meet semilattice for the order defined by A6 where the infimum $\wedge$   satisfies:

\begin{itemize}
\item[(B5)] $x\wedge  y=x\ast (x\ast y)$.
\end{itemize}

K. Iseki and S. Tanaka \cite{KI.ST1} proved that every  CBCK--algebra with last element $1$ is a lattice where the supremum $\vee$ verifies:

\begin{itemize}
\item[(B6)] $x\vee y=1\ast((1\ast x)\wedge (1\ast y))$.
\end{itemize}

T. Traczyk \cite{TT} showed that every bounded CBCK--algebra with last element $1$ is a distributive lattice. 

Y. Komori \cite{YK} considered the equational classes of CN--algebras (named Wasjberg algebras by A. J. Rodriguez \cite{AR}) and C--algebras (which we name $I$--algebras). The CN--algebras are the algebraic counterpart of the infinite--valued {\L}ukasiewicz propositional calculus with implication ($\to$) and negation  ($\sim$). The C--algebras are the algebraic version of the implicational part of this calculus. These algebras are defined as follows. 

\rr A Wajsberg algebra (or $W$--algebra) is an algebra $\langle A,\to ,\sim ,1\rangle$ of type $(2,1,0)$ which satisfies:

\begin{itemize}
\item[(W1)] $1\im x=x$,
\item[(W2)] $(x\im y)\im y=(y\im x)\im x$,
\item[(W3)] $(x\im y)\im ((y\im z)\im (x\im z))=1$,
\item[(W4)] $(\sim x\to \, \sim y)\im (y\im x)=1$.(see \cite{F.R.T,YK,AR})
\end{itemize}

An $I$--algebra is an algebra $\langle A,\to ,1\rangle$ of type $(2,0)$ which verifies:

\begin{itemize}
\item[(I1)] $1\im x=x$,
\item[(I2)] $(x\im y)\im y=(y\im x)\im x$,
\item[(I3)] $(x\im y)\im ((y\im z)\im (x\im z))=1$,
\item[(I4)] $((x\im y)\im (y\im x))\im (y\im x)=1$. (see \cite{YK,AF2})
\end{itemize}

An $I^{0}$--algebra is an algebra $\langle A,\to ,1,0\rangle$ of type $(2,0,0)$ which satisfies:

\begin{itemize}
\item[(I5)] $0\im x=1$.
\end{itemize}

We are going to denote by {\bf CBCK}, {\bf W}, {\bf I} and $\mbox{\boldmath{${\rm I}^{0}$}}$ the varieties of algebras described above respectively.

\rr The following results show the relationship between these varieties 
\begin{itemize} 
\item[(CI)] Let $A\in {\bf CBCK}$ be such that it verifies the additional identity:     
\item[] $(x\ast y)\ast ((x\ast y)\ast (y\ast x))=0$.
\end{itemize}

If we define $x\im y=y\ast x$ for all $x,y\in A$, then $\langle A,\to ,0\rangle\in {\bf I}$ and $0$ is the last element of $A$ for the dual order of A6.

\begin{itemize}
\item[(IC)] Let $A\in {\bf I}$. If we define $x\ast y=y\im x$ for all $x,y\in A$  then $\langle A,\ast ,1\rangle \in {\bf CBCK}$ and $1$ is the first element of $A$. 

\item[(WI)] If $\langle A,\to ,\sim ,1\rangle \in {\bf W}$ then the reduct $\langle A,\to ,1\rangle \in{\bf I}$. 

\item[(IW)] If $\langle A,\to ,1,0\rangle \in \mbox{\boldmath{${\rm I}^{0}$}}$ then defining  $\sim x=x\im 0$ for all $x\in A$ we have that $\langle A,\to ,\sim ,1\rangle\in {\bf W}$.

\end{itemize}

In 1978 A. Monteiro introduced the  $4$--valued modal algebras (or ${\rm M}_{4}$--algebras) as algebras $\langle A,\wedge ,\vee ,\sim ,\nabla  ,1\rangle$ of type $(2,2,1,1,0)$ which verify:

\begin{itemize}
\item[(M1)] $x\wedge (x\vee y)=x$,
\item[(M2)] $x\wedge (y\vee z)=(z\wedge x)\vee (z\wedge y)$,
\item[(M3)] $\sim \,\sim x=x$,
\item[(M4)] $\sim (x\vee y)=\,\sim x\,\wedge \sim y$,
\item[(M5)] $\nabla x\, \vee \sim x=1$,
\item[(M6)] $\nabla x \, \wedge \sim x=\,\sim x\wedge x$.\cite{IL1} (see also \cite{AF3,IL2})
\end{itemize}

It is easy to see that every ${\rm M}_{4}$--algebra satisfies:
\begin{itemize}
\item[(M7)] $1\vee x=1$.
\end{itemize}  

From M1, M2, M7, M3, M4 it follows that $\langle A,\wedge ,\vee ,\sim ,1\rangle$ is a De Morgan algebra with last element $1$.
Taking into account [16,17] we have that three--valued {\L}ukasiewicz algebras (or ${\rm L}_{3}$--algebras) are ${\rm M}_{4}$--algebras which satisfy:

\begin{itemize}
\item[(${\rm M}^{'}$6)] $\nabla (x\wedge y)=\nabla x\wedge \nabla y$.
\end{itemize}

I. Loureiro \cite{IL1}, has proved:

\begin{itemize}
\item[(M8)] If $A\in \mbox{\boldmath{${\rm M}_{4}$}}$ is non trivial, then there exists a non empty set $X$ such that $A$ is isomorphic to a subalgebra of $T_{4}^{X}$, where $\T_{4}=\langle T_{4},\vee ,\wedge, \sim ,\nabla ,1\rangle$, $T_{4}=\{0,a,b,1\}$ has the diagram of the figure 1 and $\sim ,\nabla$   are defined by means of the following tables:
\end{itemize}

\vspace*{1mm}
\hspace*{10mm}
\begin{minipage}{8cm}
\setlength{\unitlength}{1cm}
\begin{picture}(4,4)
\put(2,.5){\line(1,1){1.5}}
\put(2,.5){\line(-1,1){1.5}}
\put(2,3.5){\line(-1,-1){1.5}}
\put(2,3.5){\line(1,-1){1.5}}
\put(1.9,.05){$0$}
\put(1.6,-.5){fig. $1$}
\put(.1,1.9){$a$}
\put(3.7,1.9){$b$}
\put(1.9,3.65){$1$}
\put(3.5,2){\circle*{.1}}
\put(2.009,.5){\circle*{.1}}
\put(.5,2){\circle*{.1}}
\put(2.009,3.5){\circle*{.1}}
\end{picture}
\end{minipage}
\begin{minipage}{7cm}
\begin{tabular}{c|c|c}
$x$ & $\sim x$ & $\nabla x$\\ \hline
$0$ & $1$ & $0$\\
$a$ & $a$ & $1$\\
$b$ & $b$ & $1$\\
$1$ & $0$ & $1$\\
\end{tabular}
\end{minipage}
\vspace*{13mm}

If $A\in \mbox{\boldmath{${\rm M}_{4}$}}$, the operator $\triangle$  is defined by the formula:
\begin{itemize}
\item[(M9)] $\triangle \,x=\,\sim \nabla \sim x$.
\end{itemize}

Now we are going to indicate different operators of implication defined in an ${\rm M}_{4}$--algebra $A$:

\begin{itemize}
\item[(M10)] $x\cla y=\,\sim x\vee y$ (this operation has been defined in the De Morgan algebras \cite{MG.SG}),
\item[(M11)] $x\im y=\nabla \sim x\vee y$, (see \cite{AF3})
\item[(M12)] $x\mapsto y=(x\im y)\wedge (\nabla \,y\vee \sim x)$,
\item[(M13)] $x\zucc y=(x\mapsto y)\wedge ((x\cla y)\im (\triangle \sim x\vee y))$.
\end{itemize} 

Remark that if $A\in \mbox{\boldmath{${\rm M}_{4}$}}$ verifies the Kleene condition $x\wedge \sim x\leq y\,\vee \sim y$, or equivalently if $A\in 
\mbox{\boldmath{${\rm L}_{3}$}}$ then the operators $\mapsto$  and $\succ$  defined by M12 and M13 respectively coincide with the {\L}ukasiewicz implication.

In $\T_{4}$, the operations $\triangle ,\cla ,\to , \mapsto$  and $\succ$  have the following tables:

\vspace*{5mm}
\begin{tabular}{c|c}
$x$ & $\triangle x$\\ \hline
$0$ & $0$\\
$a$ & $0$ \\
$b$ & $0$\\
$1$ & $1$\\
\end{tabular}
\hspace*{5mm}\begin{tabular}{c|cccc}
$\cla$ & $0$ & $a$ & $b$ & $1$\\ \hline
$0$ & $1$ & $1$ & $1$ & $1$\\
$a$ & $a$ & $a$ & $1$ & $1$\\
$b$ & $b$ & $1$ & $b$ & $1$\\
$1$ & $0$ & $a$ & $b$ & $1$\\
\end{tabular}
\hspace*{5mm}\begin{tabular}{c|cccc}
$\to$ & $0$ & $a$ & $b$ & $1$\\ \hline
$0$ & $1$ & $1$ & $1$ & $1$\\
$a$ & $1$ & $1$ & $1$ & $1$\\
$b$ & $1$ & $1$ & $1$ & $1$\\
$1$ & $0$ & $a$ & $b$ & $1$\\
\end{tabular}
\hspace*{5mm}\begin{tabular}{c|cccc}
$\mapsto$ & $0$ & $a$ & $b$ & $1$\\ \hline
$0$ & $1$ & $1$ & $1$ & $1$\\
$a$ & $a$ & $1$ & $1$ & $1$\\
$b$ & $b$ & $1$ & $1$ & $1$\\
$1$ & $0$ & $a$ & $b$ & $1$\\
\end{tabular}

\vspace*{3mm}
\begin{center}
\begin{tabular}{c|cccc}
$\succ$ & $0$ & $a$ & $b$ & $1$\\ \hline
$0$ & $1$ & $1$ & $1$ & $1$\\
$a$ & $a$ & $1$ & $b$ & $1$\\
$b$ & $b$ & $a$ & $1$ & $1$\\
$1$ & $0$ & $a$ & $b$ & $1$\\
\end{tabular}
\end{center}
\vspace*{3mm}

Furthermore in $\T_{4}$ it holds

\begin{itemize}
\item[(L1)] $x\vee y=(x\zucc y)\zucc y$,
\item[(L2)] $\sim x=x\zucc 0$,
\item[(L3)] $\nabla x=\, \sim x\zucc x$,
\item[(L4)] $x\wedge y=\, \sim (\sim x\,\vee \sim y)$.
\end{itemize} 

These identities give the relationships between the variety $\mbox{\boldmath{${\rm W}_{3}$}}$ of $3$--valued Wajsberg algebras and  
$\mbox{\boldmath{${\rm L}_{3}$}}$.

On the other hand by L1,$\ldots$,L4 and M8 it results that ${\rm M}_{4}$--algebras may be characterized by means of the operations $\succ$ and $0$, or $\succ$  and $\sim$.

\rr This fact leads us to pose the following problems:

\rr {\bf Problem 1.}  Axiomatize the matrix $\langle T_{4},\succ ,\sim , D=\{1\}\rangle$.

\rr {\bf Problem 2.} Characterize the $M_{4}$--algebras by means of the operations $\{\succ ,1,0\}$ or 
\hspace*{32mm}$\{\succ ,\sim ,1\}$. 

\rr In this paper we solve the second problem . It is to this end that we introduce a new class of algebras which we name generalized $I$--algebras because it strictly contains the class of $I$--algebras.

\section{\large \bf Generalized $\mbox{\boldmath{$I$}}$--Algebras}
\vspace*{2mm}

\begin{defi}
An algebra $\langle A,\succ ,1\rangle$ of type $(2,0)$ is a generalized $I$--algebra {\rm (}or {\rm G}--algebra{\rm )} if it satisfies:

\begin{itemize}
\item[{\rm (G1)}] $1\zucc x=x$,
\item[{\rm (G2)}] $x\zucc 1=1$,
\item[{\rm (G3)}] $(x\zucc y)\zucc y=(y\zucc x)\zucc x$,
\item[{\rm (G4)}] $x\zucc (y\zucc z)=1$  implies $y\zucc (x\zucc z)=1$.
\end{itemize}
\end{defi}

\begin{exas}
\begin{itemize}
\item[]
\item[{\rm (1)}] The algebra $\langle T_{4},\succ ,1\rangle$ where $T_{4}$ and $\succ$ are defined before is an {\rm G}--algebra but it is not an $I$--algebra.
\item[{\rm (2)}] Let $\langle A,\ast ,0\rangle \in {\bf CBCK}$. If we define $x\zucc y=y\ast x$, for all $x,y\in A$, then $\langle A,\succ ,0\rangle\in {\bf G}$.
\end{itemize}

\end{exas}

\begin{lem} If $A\in {\bf G}$, then it holds:
\begin{itemize}
\item[{\rm (G5)}] $x\zucc x=1$,
\item[{\rm (G6)}] $x\zucc y=1, \, y\zucc x=1$, imply $x=y$,
\item[{\rm (G7)}] $x\zucc (y\zucc x)=1$,
\item[{\rm (G8)}] $x\zucc ((x\zucc y)\zucc y)=1$,
\item[{\rm (G9)}] $x\zucc (z\zucc ((x\zucc y)\zucc y))=1$,
\item[{\rm (G10)}] $x\zucc y=1, \, y\zucc z=1$, imply $x\zucc z=1$,
\item[{\rm (G11)}] $(A,\leq )$ is a partially ordered set, where $\leq$  is given by $x\leq y$ if and only if $x\zucc y=1$,
\item[{\rm (G12)}] $x\leq (x\zucc y)\zucc y$,
\item[{\rm (G13)}] $y\leq (x\zucc y)\zucc y$,
\item[{\rm (G14)}] $x\leq y$ implies $y\zucc z\leq x\zucc z$,
\item[{\rm (G15)}] $x\leq z, \,y\leq z$ imply $(x\zucc y)\zucc y\leq z$,
\item[{\rm (G16)}] $(A,\leq)$ is a join semilattice where the supremun, for all $x,y\in A$ are defined by $x\vee y=(x\zucc y)\zucc y$.
\end{itemize}
\end{lem}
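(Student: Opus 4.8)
The plan is to derive the whole list in essentially the printed order, exploiting that (G4) is an \emph{exchange} rule: applying it once with the roles of its two outer arguments interchanged shows that $x\succ(y\succ z)=1$ and $y\succ(x\succ z)=1$ are equivalent, so (G4) lets me transpose the first two slots of any nested term that evaluates to $1$, in either direction. Combined with (G2), which absorbs a trailing $1$, this yields a uniform recipe: manufacture a nested term equal to $1$ (usually from reflexivity), pad it with a $1$ via (G2) if necessary, then transpose to the desired shape with (G4).

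First the cheap items. For (G5) I would instantiate (G3) with $y=1$: the left side collapses through (G2) and (G1) to $1$, while the right side is $x\succ x$, so $x\succ x=1$. For (G6), feeding $x\succ y=1$ and $y\succ x=1$ into the two sides of (G3) and simplifying by (G1) turns the identity into $y=x$. Items (G7), (G8), (G9) all follow the recipe above: start from reflexivity ($x\succ x=1$ for (G7), $(x\succ y)\succ(x\succ y)=1$ for (G8)), absorb a $1$ with (G2) where needed, and transpose with (G4); (G9) comes from (G8) by one extra padding-and-transposition.

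The hard part will be transitivity (G10): since the hypotheses are equalities to $1$, naive substitution into (G4) collapses everything to triviality. The trick I would use is to rewrite the \emph{conclusion} instead of the hypotheses. From $y\succ z=1$ and (G1) one gets $(y\succ z)\succ z=z$, hence by (G3) also $(z\succ y)\succ y=z$; meanwhile $x\succ y=1$ gives, by (G2), $(z\succ y)\succ(x\succ y)=1$, and a single application of (G4) with first slot $z\succ y$ converts this into $x\succ((z\succ y)\succ y)=1$, that is, $x\succ z=1$. With (G5), (G6), (G10) established, (G11) is immediate, and (G12), (G13) are just (G8) and (G7) reread in the order notation.

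Finally the lattice-theoretic items. For (G14) I would use the exchange rule to reduce the goal $(y\succ z)\succ(x\succ z)=1$ to $x\leq (y\succ z)\succ z=y\vee z$, which follows from $x\leq y$, from $y\leq y\vee z$ (this is (G13)), and transitivity (G10). For (G15) I would apply (G14) twice: from $y\leq z$ it yields $z\succ x\leq y\succ x$, and applying (G14) once more gives $(y\succ x)\succ x\leq (z\succ x)\succ x$; by (G3) the left side equals $x\vee y$, and since $x\leq z$ the right side is $(x\succ z)\succ z=z$, so $x\vee y\leq z$. Then (G16) is pure assembly: (G12)--(G13) make $(x\succ y)\succ y$ an upper bound of $x$ and $y$, (G15) makes it the least one, and (G11) supplies the order, so $(A,\leq)$ is a join semilattice with supremum $(x\succ y)\succ y$. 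I expect (G10) and the double use of (G14) inside (G15) to be the only places needing genuine care; everything else is the mechanical exchange-and-absorb recipe.
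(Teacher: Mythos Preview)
Your argument is correct and follows essentially the same route as the paper's; the only slip is in your sketch for (G14), where the inequality $y\leq (y\succ z)\succ z$ is an instance of (G12), not of (G13). Your derivation of (G10) is in fact a bit leaner than the paper's (you reach $x\succ((z\succ y)\succ y)=1$ via (G2) and a single application of (G4) rather than via (G9)), but the key idea---rewriting $z$ as $(z\succ y)\succ y$ through (G3) and (G1)---is identical.
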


\begin{proof}
\begin{itemize}
\item[(G5)] $x\zucc x=(1\zucc x)\zucc x$, \hfill [G1]
\item[]\ \ \ \ \ \ \ \, $=(x\zucc 1)\zucc 1$, \hfill [G3]
\item[]\ \ \ \ \ \ \ \, $=1$. \hfill [G2]
\item[(G6)] If 

\begin{itemize}
\item[(1)] $x\zucc y=1$,
\item[(2)] $y\zucc x=1$,
\end{itemize}
then  

\begin{itemize}
\item[] $x=1\zucc x$, \hfill [G1]
\item[]\ \, $=(y\zucc x)\zucc x$, \hfill [(2)]
\item[]\ \, $=(x\zucc y)\zucc y$, \hfill [G3]
\item[]\ \, $=1\zucc y$, \hfill [(1)]
\item[]\ \, $=y$.\hfill [G1] 
\end{itemize} 

\item[(G7)] 
\begin{itemize}
\item[(1)] $y\zucc (x\zucc x)=1$,\hfill [G5,G2]
\item[(2)] $x\zucc (y\zucc x)=1$.\hfill [(1),G4]
\end{itemize}

\item[(G8)] It follows from G5 and G8.

\item[(G9)]
 \begin{itemize}
\item[(1)] $x\zucc ((x\zucc y)\zucc y)=1$, \hfill [G8]
\item[(2)] $z\zucc (x\zucc ((x\zucc y)\zucc y))=z\zucc 1=1$, \hfill [(1),G2]
\item[(3)] $x\zucc (z\zucc ((x\zucc y)\zucc y))=1$. \hfill [(2),G3]
\end{itemize}

\item[(G10)] If 

\begin{itemize}
\item[(1)] $x\zucc y=1$,
\item[(2)] $y\zucc z=1$,
\end{itemize}
then 

\begin{itemize}
\item[] $(x\zucc z)=x\zucc (1\zucc z), $\hfill [G1] 
\item[]\ \ \ \ \ \ \ \ \ \, $=x\zucc ((y\zucc z)\zucc z)$, \hfill [(2)]
\item[]\ \ \ \ \ \ \ \ \ \, $=x\zucc ((z\zucc y)\zucc y)$, \hfill [G3] 
\item[]\ \ \ \ \ \ \ \ \ \, $=x\zucc ((z\zucc y)\zucc (1\zucc y))$, \hfill [G1] 
\item[]\ \ \ \ \ \ \ \ \ \, $=x\zucc ((z\zucc y)\zucc ((x\zucc y)\zucc y))$, \hfill [(1)] 
\item[]\ \ \ \ \ \ \ \ \ \, $=1$. \hfill [G9]
\end{itemize}

\item[(G11)] It follows from G5, G6 and G10.
\item[(G12)] It follows from G8 and G11.

\item[(G13)]
\begin{itemize}
\item[(1)] $y\zucc ((x\zucc y)\zucc y)=1$, \hfill [G7]
\item[(2)] $y\leq (x\zucc y)\zucc y$. \hfill [(1),G11]
\end{itemize}

\item[(G14)] If 
\begin{itemize}
\item[(1)] $x\leq y$,
\end{itemize}
then
\begin{itemize}
\item[(2)] $y\leq (y\zucc z)\zucc z$, \hfill [G12]
\item[(3)] $x\leq (y\zucc z)\zucc z$, \hfill [(1),(2),G11]
\item[(4)] $1=x\zucc ((y\zucc z)\zucc z)$, \hfill [(3),G11]
\item[]\ \, $=(y\zucc z)\zucc (x\zucc z)$, \hfill [G4]
\item[(5)] $y\zucc z\leq x\zucc z$. \hfill [(4),G11]
\end{itemize}

\item[(G15)] If
\begin{itemize}
\item[(1)] $x\leq z$,
\item[(2)] $y\leq z$,
\end{itemize}
then 
\begin{itemize}
\item[(3)] $z\zucc y\leq x\zucc y$, \hfill [(1),G14]
\item[(4)] $(x\zucc y)\zucc y\leq (z\zucc y)\zucc y$, \hfill [(3),G14]
\item[(5)] $(x\zucc y)\zucc y\leq (y\zucc z)\zucc z$, \hfill [(4),G3]
\item[(6)] $(x\zucc y)\zucc y\leq z$. \hfill [(5),(2),G11,G1]
\end{itemize}

\item[(G16)]  It follows from G12, G13 and G15.
\end{itemize}
\end{proof}

\begin{defi}
$A\in {\bf G}$ is bounded {\rm (}or ${\rm G}^{0}$--algebra{\rm )} if there exists $0\in A$ such that 
\begin{itemize}
\item[{\rm (G17)}] $0\leq x$ for all $x\in A$.
\end{itemize}
\end{defi}

If $A\in \mbox{\boldmath{${\rm G}^{0}$}}$, we can define the unary operation $\sim$ (called negation) by means of the

\begin{itemize}
\item[{\rm (G18)}] $\sim x=x\zucc 0$.
\end{itemize}

\begin{lem}
If $A\in \mbox{\boldmath{${\rm G}^{0}$}}$ then it holds:

\begin{itemize}
\item[{\rm (G19)}] $\sim \,\sim x=x$,
\item[{\rm (G20)}] $x\leq y$ implies $\sim y\leq \, \sim x$,
\item[{\rm (G21)}] $\sim (\sim x\,\vee \sim y)\leq x$,
\item[{\rm (G22)}] $\sim (\sim x\,\vee \sim y)\leq y$,
\item[{\rm (G23)}] $z\leq x, \, z\leq y$ imply $z\leq \,\sim (\sim x\,\vee \sim y)$,
\item[{\rm (G24)}] $A$ is a meet semilattice where for all $x,y\in A$ the infimum satisfies
\item[] $x\wedge y=\,\sim (\sim x\,\vee \sim y)=(((x\zucc 0)\zucc (y\zucc 0))\zucc (y\zucc 0))\zucc 0$,
\item[{\rm (G25)}] $\sim (x\vee y)=\,\sim x\,\wedge \sim y$,
\item[{\rm (G26)}] $\sim (x\wedge y)=\,\sim x\,\vee \sim y$,
\item[{\rm (G27)}] $(x\zucc y)\vee (y\zucc z)\leq (x\wedge y)\zucc z$.
\end{itemize}

\end{lem}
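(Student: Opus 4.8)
The whole lemma rests on three facts already secured in Lemma~2.1 --- the join formula together with its least--upper--bound property (G16), the antitonicity (G14) of $\zucc$ in its first argument, and the bottom element (G17) --- together with the definition (G18) of $\sim$. I would begin with (G19): since $0\leq x$ gives $x\vee 0=x$ by (G16) and (G17), while (G16) also gives $x\vee 0=(x\zucc 0)\zucc 0$, we read off $\sim\sim x=(x\zucc 0)\zucc 0=x$ at once. Then (G20) is nothing but (G14) specialized to $z=0$: from $x\leq y$ it produces $y\zucc 0\leq x\zucc 0$, that is $\sim y\leq\sim x$. Hence $\sim$ is an order--reversing involution, and this is the engine for everything that follows.

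With $\sim$ in hand, (G21)--(G26) are routine order calculations. For (G21) and (G22) I would apply $\sim$ (using (G20)) to the upper--bound inequalities $\sim x\leq\sim x\vee\sim y$ (G12) and $\sim y\leq\sim x\vee\sim y$ (G13) and then cancel the double negation via (G19). For (G23), from $z\leq x$ and $z\leq y$ one gets $\sim x,\sim y\leq\sim z$, hence $\sim x\vee\sim y\leq\sim z$ because $\vee$ is the least upper bound (G15), and applying $\sim$ once more yields the claim. Items (G21)--(G23) say precisely that $\sim(\sim x\vee\sim y)$ is the greatest lower bound of $x$ and $y$, which is (G24); the closed expression is obtained by substituting (G18) and (G16). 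The De Morgan laws then fall out by cancelling double negations: replacing $x,y$ with $\sim x,\sim y$ in (G24) gives (G25), and applying $\sim$ to (G24) gives (G26).

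For (G27) the natural route is pure monotonicity. By (G24) we have $x\wedge y\leq x$ and $x\wedge y\leq y$, so (G14) yields $x\zucc z\leq(x\wedge y)\zucc z$ and $y\zucc z\leq(x\wedge y)\zucc z$; since $\vee$ is the least upper bound (G16), this gives $(x\zucc z)\vee(y\zucc z)\leq(x\wedge y)\zucc z$. The obstacle, and the reason the statement as printed cannot be proved, is that the first summand displayed in (G27) is $x\zucc y$, not $x\zucc z$, and it does not fit this monotonicity pattern. As worded the inequality in fact fails: the algebra $\langle T_4,\zucc,1\rangle$, which is a G--algebra by Example~2.1(1) and is bounded, hence lies in $\mbox{\boldmath{${\rm G}^{0}$}}$, and there the choice $x=y=1$, $z=0$ gives $(1\zucc 1)\vee(1\zucc 0)=1\vee 0=1$ on the left but $(1\wedge 1)\zucc 0=1\zucc 0=0$ on the right, so that $1\not\leq 0$. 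The first summand must therefore read $x\zucc z$, and with that correction the monotonicity argument above completes (G27).
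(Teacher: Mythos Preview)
Your proposal is correct and, for (G27), follows exactly the paper's route: the paper proves only (G27), via $x\wedge y\leq x$ and $x\wedge y\leq y$ (from G21, G22, G24), then G14 to get $x\zucc z\leq(x\wedge y)\zucc z$ and $y\zucc z\leq(x\wedge y)\zucc z$, and finally G15/G16 for the join. Your observation about the misprint is well taken --- the paper's own proof silently writes $x\zucc z$ (not $x\zucc y$) in its step (3), confirming that the intended statement is $(x\zucc z)\vee(y\zucc z)\leq(x\wedge y)\zucc z$; your $T_4$ counterexample makes the necessity of this correction explicit, which the paper does not do.
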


\begin{proof}
We only prove G27. 

\begin{itemize}
\item[{\rm (G27)}] 
\begin{itemize}
\item[{\rm (1)}] $x\wedge y\leq x$,\hfill [G21,G24] 
\item[{\rm (2)}] $x\wedge y\leq y$,\hfill [G22,G24]
\item[{\rm (3)}] $x\zucc z\leq (x\wedge y)\zucc z$,\hfill [(1),G14]
\item[{\rm (4)}] $y\zucc z\leq (x\wedge y)\zucc z$,\hfill [(2),G14]
\item[{\rm (5)}] $(x\zucc z)\vee (y\zucc z)\leq (x\vee y)\zucc z$.\hfill [(3),(4),G15,G16]
\end{itemize} 

\end{itemize}
\end{proof}

\begin{defi}
$A\in {\bf G^{0}}$ is distributive {\rm (}or ${\rm DG}^{0}$--algebra{\rm )} if it satisfies:
\begin{itemize}
\item[{\rm (DG1)}] $(x\wedge y)\zucc z\leq (x\zucc z)\vee (y\zucc z)$.
\end{itemize}
\end{defi}

\rr If $A\in \mbox{\boldmath{${\rm DG}^{0}$}}$ then it holds 

\begin{itemize}
\item[{\rm (DG2)}] $(x\wedge y)\zucc z\leq (x\zucc z)\vee (y\zucc z)$.
\end{itemize}

\begin{exas}
\begin{itemize}
\item[]
\item[{\rm (1)}] The ${\rm G}^{0}$--algebra $\langle T_{4},\succ ,1\rangle$ is distributive.
\item[{\rm (2)}] Let $\langle A,\succ ,1\rangle \in \mbox{\boldmath{${\rm G}^{0}$}}$ where $A=\{0,a,b,c,1\}$ and $\succ$  is given by the following table
\end{itemize}
\vspace*{5mm}

\begin{center}
\begin{tabular}{c|ccccc}
$\succ$ & $0$ & $a$ & $b$ & $c$ & $1$\\ \hline
$0$ & $1$ & $1$ & $1$ & $1$ & $1$\\
$a$ & $a$ & $1$ & $b$ & $c$ & $1$\\
$b$ & $b$ & $a$ & $1$ & $c$ & $1$ \\
$c$ & $c$ & $a$ & $b$ & $1$ & $1$\\
$1$ & $0$ & $a$ & $b$ & $c$ & $1$\\
\end{tabular}
\end{center}
\vspace*{3mm}

\begin{itemize}
\item[] Then $(A,\leq )$ has the diagram of figure 2 
\end{itemize}

\begin{center}
\setlength{\unitlength}{1cm}
\begin{picture}(4,4)
\put(2,.5){\line(1,1){1.5}}
\put(2,.5){\line(-1,1){1.5}}
\put(2,.5){\line(0,1){3}}
\put(2,3.5){\line(-1,-1){1.5}}
\put(2,3.5){\line(1,-1){1.5}}
\put(1.9,.05){$0$}
\put(1.6,-.5){fig. $2$}
\put(.1,1.9){$a$}
\put(2.18,1.9){$b$}
\put(3.7,1.9){$c$}
\put(1.9,3.65){$1$}
\put(3.5,2){\circle*{.1}}
\put(2.009,.5){\circle*{.1}}
\put(2.009,2){\circle*{.1}}
\put(.5,2){\circle*{.1}}
\put(2.009,3.5){\circle*{.1}}
\end{picture}
\end{center}
\vspace*{5mm}

\begin{itemize}
\item[] A is not distributive, since $(a\wedge b)\zucc c = 1$,  $(a\zucc b)\vee (b\zucc c)=c$  and $1\not=c$.
\end{itemize}

\end{exas}

\begin{theo} If $A\in \mbox{\boldmath{${\rm DG}^{0}$}}$ then $A$ is a De Morgan algebra for the operations $\vee , \wedge ,\sim $ defined in {\rm L1, L4} and {\rm L2}.
\end{theo}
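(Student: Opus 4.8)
The plan is to notice that Lemmas~2.1 and~2.2 already supply all of the De Morgan structure except distributivity. Indeed, $(A,\leq)$ is a poset (G11) in which the operation $\vee$ of L1 is the supremum (G16) and the operation $\wedge$ of L4 is the infimum (G24); being the join and meet of one and the same order, they make $\langle A,\wedge,\vee\rangle$ a lattice, bounded below by $0$ (G17) and above by $1$ (G2). Furthermore $\sim$ (L2) is an involution (G19) obeying $\sim(x\vee y)=\sim x\wedge\sim y$ (G25). Hence the absorption law M1, the involution law M3 and the De Morgan law M4 hold, and the whole theorem reduces to the single distributive law M2; by involutivity of $\sim$ it is equivalent to its dual form $(p\vee q)\wedge(p\vee r)=p\vee(q\wedge r)$ (join over meet).

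Before that I would isolate two facts. From G4 one reads off the exchange rule $x\leq y\succ z\iff y\leq x\succ z$ (apply G4 in each direction, recalling that $u\succ v=1$ means $u\leq v$). Combining it with the least--upper--bound property of $\vee$ (G15, G16) yields the identity $(\dagger)$ $(x\succ z)\wedge(y\succ z)\leq(x\vee y)\succ z$: writing $s=x\succ z$ and $t=y\succ z$, the inequality $s\wedge t\leq(x\vee y)\succ z$ is, by exchange, the same as $x\vee y\leq(s\wedge t)\succ z$, hence as the pair $x\leq(s\wedge t)\succ z$ and $y\leq(s\wedge t)\succ z$, which by exchange again become $s\wedge t\leq x\succ z=s$ and $s\wedge t\leq y\succ z=t$, both obvious. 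The hypothesis $A\in\mathbf{DG}^{0}$ enters only through DG2, the equality $(x\wedge y)\succ z=(x\succ z)\vee(y\succ z)$ obtained by pairing DG1 with G27.

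The core computation then proves $(p\vee q)\wedge(p\vee r)\leq p\vee(q\wedge r)$ (the reverse inequality is automatic in any lattice). Put $w=p\vee(q\wedge r)$, so that $p\leq w$ and $q\wedge r\leq w$, giving $p\succ w=1$ and $(q\wedge r)\succ w=1$. By DG2, $\bigl((p\vee q)\wedge(p\vee r)\bigr)\succ w=\bigl((p\vee q)\succ w\bigr)\vee\bigl((p\vee r)\succ w\bigr)$. Now $(\dagger)$ together with $p\succ w=1$ gives $q\succ w=(p\succ w)\wedge(q\succ w)\leq(p\vee q)\succ w$, and symmetrically $r\succ w\leq(p\vee r)\succ w$; hence the right--hand side is at least $(q\succ w)\vee(r\succ w)$, which by DG2 equals $(q\wedge r)\succ w=1$. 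Thus $\bigl((p\vee q)\wedge(p\vee r)\bigr)\succ w=1$, i.e. $(p\vee q)\wedge(p\vee r)\leq w$. This establishes distributivity, hence M2, and together with M1, M3, M4 shows that $A$ is a De Morgan algebra.

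I expect the only genuine obstacle to be spotting the instance of DG2 that triggers distributivity: the choice $w=p\vee(q\wedge r)$ and the use of $(\dagger)$ to re-absorb the summand $p$. That DG1 is truly indispensable is confirmed by the five--element ${\rm G}^{0}$--algebra of figure~2, where DG2 fails (there $(a\wedge b)\succ c=1\neq c=(a\succ c)\vee(b\succ c)$), and with it the whole computation collapses.
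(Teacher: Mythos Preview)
Your argument is correct, and like the paper you observe that Lemmas~2.1 and~2.2 already deliver the bounded lattice structure, the involution and the De~Morgan laws, so that only distributivity remains. The paper, however, establishes distributivity by a different (and somewhat shorter) route: it proves the \emph{cancellation law} ($x\wedge y=x\wedge z$ and $x\vee y=x\vee z$ imply $y=z$). From $x\wedge y=x\wedge z$ one gets $1=(x\wedge z)\zucc z=(x\wedge y)\zucc z=(x\zucc z)\vee(y\zucc z)$ via DG2, and from $x\vee y=x\vee z$ one gets $1=y\zucc(x\vee z)=y\zucc((x\zucc z)\zucc z)$, whence a single application of G4 yields $x\zucc z\leq y\zucc z$; combining the two gives $y\zucc z=1$, and by symmetry $y=z$. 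Your approach instead attacks the distributive inequality $(p\vee q)\wedge(p\vee r)\leq p\vee(q\wedge r)$ head-on, after isolating the adjunction-style lemma $(\dagger)$, and invokes DG2 twice. Both routes work; the paper's has the virtue of using DG2 only once and needing no auxiliary lemma, while yours makes the role of the exchange rule (and hence of G4) conceptually clearer and yields $(\dagger)$ as a reusable by-product.
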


\begin{proof} 
Taking into account 2.5, 2.8, G29 and G25 we only prove the distributive law B7 or equivalently the cancellation law

\begin{itemize}
\item[(CL)] $x\wedge y=x\wedge z$, \,$x\vee y=x\vee z$ imply $y=z$:\\
 If
\begin{itemize}
\item[(1)] $x\wedge y=x\wedge z$,
\item[(2)] $x\vee y=x\vee z$,
\end{itemize} 
then
\begin{itemize}
\item[(3)] $1=(x\wedge z)\zucc z$, \hfill [G24,G11]
\item[]\ \, $=(x\wedge y)\zucc z$, \hfill [(1)]
\item[]\ \, $=(x\zucc z)\vee (y\zucc z)$,\hfill [DG2]
\item[(4)] $1=y\zucc (y\vee z)$, \hfill [G11,G16]
\item[]\ \, $=y\zucc (x\vee z)$, \hfill [(2)]
\item[]\ \, $=y\zucc ((x\zucc z)\zucc z)$, \hfill [G16]
\item[]\ \, $=(x\zucc z)\zucc (y\zucc z)$, \hfill [G4]
\item[(5)] $x\zucc z\leq y\zucc z$, \hfill [(4),G11]
\item[(6)] $1=y\zucc z$, \hfill [(5),(3),G16]
\item[(7)] $y\leq z$. \hfill [(6),G11]
\end{itemize}

Similar we prove
\begin{itemize}
\item[(8)] $z\leq y$.
\end{itemize}

CL It result from (6), (7) and G11.
\end{itemize}
\end{proof}

\section{\large \bf Modal $\mbox{\boldmath{${\rm G}_{4}^{0}$}}$--Algebras}
\vspace*{3mm}

It is easy to prove
\begin{lem} Let $A\in {\bf G}$. The following identities are equivalent:
\begin{itemize}
\item[{\rm (${\rm G}^{'}28$)}] $((x\zucc (x\zucc y))\zucc x)\zucc x=1$,
\item[{\rm (${\rm G}^{'}29$)}] $(x\zucc (x\zucc y))\vee x=1$,
\item[{\rm (${\rm G}^{'}30$)}] $(x\zucc (x\zucc y))\zucc x= x$.
\end{itemize}
\end{lem}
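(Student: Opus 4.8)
The plan is to exploit the description of the join from G16, namely $u\vee v=(u\zucc v)\zucc v$, which collapses two of the three conditions with no work at all. Writing $t=x\zucc(x\zucc y)$ for brevity throughout, G16 gives $t\vee x=(t\zucc x)\zucc x$. Hence (${\rm G}^{'}29$), which reads $t\vee x=1$, is literally the same statement as (${\rm G}^{'}28$), which reads $(t\zucc x)\zucc x=1$. So the equivalence (${\rm G}^{'}28$)$\Leftrightarrow$(${\rm G}^{'}29$) requires nothing beyond unfolding the definition of $\vee$.

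Next I would close the cycle by establishing (${\rm G}^{'}30$)$\Rightarrow$(${\rm G}^{'}28$) and (${\rm G}^{'}28$)$\Rightarrow$(${\rm G}^{'}30$). The first direction is a one-line computation: if $t\zucc x=x$, then $(t\zucc x)\zucc x=x\zucc x=1$ by G5, which is exactly (${\rm G}^{'}28$).

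For the converse (${\rm G}^{'}28$)$\Rightarrow$(${\rm G}^{'}30$), the key observation is that both inequalities needed for an antisymmetry argument are already at hand. On the one hand, G7 applied with $t$ substituted for its second variable yields $x\zucc(t\zucc x)=1$, that is $x\leq t\zucc x$. On the other hand, (${\rm G}^{'}28$) says precisely $(t\zucc x)\zucc x=1$, which by the definition of $\leq$ in G11 means $t\zucc x\leq x$. Antisymmetry (G6, equivalently G11) then forces $t\zucc x=x$, which is (${\rm G}^{'}30$).

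I expect no serious obstacle: the whole argument rests on recognizing that (${\rm G}^{'}28$) is nothing but the relation $t\zucc x\leq x$ rewritten through G11, and that its companion $x\leq t\zucc x$ is a free instance of G7. The only point deserving a little care is that G7 must be applied with the compound term $t=x\zucc(x\zucc y)$ in place of the second variable, which is legitimate since G7 holds for all elements of $A$.
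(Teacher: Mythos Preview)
Your argument is correct and is exactly the natural one: unfolding $\vee$ via G16 identifies (${\rm G}^{'}28$) with (${\rm G}^{'}29$), and the equivalence with (${\rm G}^{'}30$) follows from G5, G7, and antisymmetry (G6/G11) as you describe. The paper itself omits the proof, merely remarking that the lemma is easy; your write-up supplies precisely the routine verification the authors had in mind.
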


\begin{defi}
$A\in {\bf G}$ or $A\in \mbox{\boldmath{${\rm G}^{0}$}}$ is a ${\rm G}_{4}$--algebra or ${\rm G}_{4}^{0}$--algebra respectively if it satisfies ${\rm G}^{'}28$.
\end{defi}

\begin{exa} Let $C_{n+1}=\{0,\frac1n,\frac2n,\ldots  ,\frac{n-1}{n},1\}$, where $n\geq 3$. For $x,y\in C_{n+1}$ we define \\[1.5mm] $x\zucc y=min \ \{1,1-x+y\}$, then $\langle C_{n+1},\succ ,1\rangle$ is a $G^{0}$--algebra which in not an $G_{4}^{0}$--algebra.
\end{exa}

For each $A\in \mbox{\boldmath{${\rm G}_{4}^{0}$}}$ we define the operators $\luk , \nabla$, by means of the formulas: 
\begin{itemize}
\item[{\rm (${\rm G}^{'}31$)}] $x\luk y=x\zucc (x\zucc y)$,
\item[{\rm (${\rm G}^{'}32$)}] $\nabla x=\,\sim x\zucc x$.
\end{itemize}

\begin{lem} If $A\in \mbox{\boldmath{${\rm G}_{4}^{0}$}}$, then it holds:
\begin{itemize}
\item[{\rm (${\rm G}^{'}33$)}] $\sim x\luk x=\,\sim x\zucc x$,
\item[{\rm (${\rm G}^{'}34$)}] $x\leq \nabla x$.
\end{itemize}
\end{lem}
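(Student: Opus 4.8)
The plan is to treat the two identities separately and, in both cases, to reduce everything to the order-theoretic facts already established in Section 2 together with the modal identity G'29; the latter is available because, by Lemma 3.1, it is equivalent to the defining axiom G'28 of a $\mathrm{G}_4^0$--algebra.

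First I would dispose of G'34, which is immediate. Specializing G7 with $y:=\,\sim x$ gives $x\zucc(\sim x\zucc x)=1$, and by G11 this is exactly $x\leq\,\sim x\zucc x=\nabla x$.

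For G'33 I would prove the two inequalities $\nabla x\leq\,\sim x\luk x$ and $\sim x\luk x\leq\nabla x$, and then conclude by antisymmetry (G6, G11). By G'31 and G'32 one has $\sim x\luk x=\,\sim x\zucc(\sim x\zucc x)=\,\sim x\zucc\nabla x$, so the first inequality follows at once from G7, since $\nabla x\leq\,\sim x\zucc\nabla x$. The second inequality is the substantive one. Here I would compute, using G16, that $(\sim x\luk x)\zucc\nabla x=(\sim x\zucc\nabla x)\zucc\nabla x=\,\sim x\vee\nabla x$, so that (via G11) the inequality $\sim x\luk x\leq\nabla x$ becomes equivalent to the single identity $\sim x\vee\nabla x=1$.

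The key step is therefore to establish $\sim x\vee\nabla x=1$, and this is where the modal axiom enters. By G19 we have $\sim\sim x=x$, which by the definition G18 of $\sim$ reads $\sim x\zucc 0=x$; hence $\nabla x=\,\sim x\zucc x=\,\sim x\zucc(\sim x\zucc 0)$. Now G'29, applied with $\sim x$ in the role of $x$ and $0$ in the role of $y$, yields $(\sim x\zucc(\sim x\zucc 0))\vee\sim x=1$, that is $\nabla x\vee\sim x=1$, and by commutativity of the join (G16) this is exactly $\sim x\vee\nabla x=1$. I expect this reduction — recognising $\nabla x$ as an instance of the pattern $u\zucc(u\zucc y)$ so that G'29 becomes applicable — to be the only real obstacle; the remaining manipulations are routine bookkeeping with the order relation furnished by Lemmas 2.2 and 2.5.
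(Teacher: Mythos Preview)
Your proof is correct and follows essentially the same route as the paper: both arguments obtain the easy inequality from G7 and the substantive one from the modal identity ${\rm G}'29$ (with $y=0$), recognising $\nabla x=\sim x\zucc(\sim x\zucc 0)$ via G18/G19 so that the join expression collapses to $1$. The only cosmetic difference is that the paper carries out the computation with the variable $x$ in place of your $\sim x$ (establishing $x\luk\sim x=x\zucc\sim x$), which is the same identity up to the involution G19.
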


\begin{proof} 
We only prove ${\rm G}^{'}33$

\begin{itemize}

\item[{\rm (${\rm G}^{'}33$)}] 

\begin{itemize}
\item[(1)] $1=x\vee ((x\zucc (x\zucc 0))$, \hfill [${\rm G}^{'}29$]
\item[]\ \, $=(x\zucc (x\zucc \,\sim x))\zucc (x\zucc \,\sim x)$, \hfill [G18,G16]
\item[]\ \, $=(x\luk \sim x)\zucc (x\zucc \,\sim x)$, \hfill [${\rm G}^{'}31$] 
\item[(2)] $x\luk \sim x\leq x\zucc \,\sim x$. \hfill [(1),G11]
\item[(3)] $x\zucc \,\sim x\leq x\luk  \sim x$.\hfill [G7,${\rm G}^{'}31$,G11]
\end{itemize} 

Now ${\rm G}^{'}33$ follows from (2), (3) and G11.
\end{itemize}
\end{proof}

The following Lemma is an inmediately consequence of the results and definitions given above

\begin{lem} Let $A\in \mbox{\boldmath{${\rm G}_{4}^{0}$}}$. The following identities are equivalent:
\begin{itemize}
\item[{\rm (${\rm G}^{'}35$)}] $\nabla x\luk 0=\nabla x\zucc 0$,
\item[{\rm (${\rm G}^{'}36$)}] $\sim \nabla x\zucc \nabla x=\nabla x$,
\item[{\rm (${\rm G}^{'}37$)}] $\sim \nabla x\vee \nabla x=1$, 
\item[{\rm (${\rm G}^{'}38$)}] $\nabla x\,\vee \sim \nabla x=0$.
\end{itemize} 
\end{lem}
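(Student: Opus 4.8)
The plan is to reduce all four statements to conditions on the single element $\nabla x$ together with its negation, and then to run them around a short cycle using only the description of $\vee$ in G16, the order facts G5, G6, G7, G11, and the De Morgan apparatus G19, G24. The first and only genuine computation is to unfold G'35: by G'31 its left-hand side is $\nabla x\zucc(\nabla x\zucc 0)$, and by G18 we have $\nabla x\zucc 0=\,\sim\nabla x$, so G'35 is nothing but the identity $\nabla x\zucc\,\sim\nabla x=\,\sim\nabla x$. Writing $u=\nabla x$ (so that $\sim u=\,\sim\nabla x$ and, by G19, $\sim\,\sim u=u$), the four conditions become: G'35 as $u\zucc\,\sim u=\,\sim u$; G'36 as $\sim u\zucc u=u$; G'37 as $\sim u\vee u=1$; and G'38 as a statement about $u\,\vee\,\sim u$.

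Next I would prove G'35 $\Leftrightarrow$ G'37. By G16, $u\vee\,\sim u=(u\zucc\,\sim u)\zucc\,\sim u$; if G'35 holds this equals $\sim u\zucc\,\sim u=1$ by G5, and since $\vee$ is commutative we obtain G'37. Conversely, if $\sim u\vee u=1$ then $(u\zucc\,\sim u)\zucc\,\sim u=1$, so $u\zucc\,\sim u\leq\,\sim u$ by G11; combined with $\sim u\leq u\zucc\,\sim u$, which is G7 read through G11, antisymmetry G6 yields $u\zucc\,\sim u=\,\sim u$, i.e. G'35. The equivalence G'36 $\Leftrightarrow$ G'37 is entirely parallel with the roles of $u$ and $\sim u$ interchanged: $\sim u\vee u=(\sim u\zucc u)\zucc u$ by G16, which collapses to $u\zucc u=1$ under G'36, and the converse again combines $(\sim u\zucc u)\zucc u=1$ with $u\leq\,\sim u\zucc u$ from G7. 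Thus G'35, G'36 and G'37 are all equivalent to the single excluded-middle condition $\nabla x\,\vee\,\sim\nabla x=1$.

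It remains to link this with G'38, and here I would use the De Morgan description of the meet: by G24 and G19, $u\wedge\,\sim u=\,\sim(\sim u\,\vee\,\sim\,\sim u)=\,\sim(\sim u\vee u)$. Since $\sim$ is an involution (G19) with $\sim 1=1\zucc 0=0$ (by G1) and $\sim 0=0\zucc 0=1$ (by G5), the equation $\sim u\vee u=1$ holds if and only if $\sim(\sim u\vee u)=0$, that is, if and only if $\nabla x\,\wedge\,\sim\nabla x=0$; this closes the cycle. I expect the main point needing care to be G'38 itself: as printed it reads $\nabla x\,\vee\,\sim\nabla x=0$, but the supremum $\nabla x\,\vee\,\sim\nabla x$ dominates $\nabla x$, so it can equal $0$ only when $\nabla x=0$, and the identity as written would therefore fail for any $x$ with $\nabla x\neq 0$. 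The intended statement is the De Morgan dual $\nabla x\,\wedge\,\sim\nabla x=0$, and it is this reading that the argument above establishes. Notably neither G'33 nor G'34 is needed, which is exactly why the lemma is an immediate consequence of the earlier material: the whole content lies in the opening reduction of G'35 through G'31 and G18 and in the single De Morgan step.
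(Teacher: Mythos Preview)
Your argument is correct and is precisely the kind of unpacking the paper has in mind: the paper gives no proof here beyond the sentence ``an immediate consequence of the results and definitions given above,'' and your derivation makes that immediacy explicit. The reduction of ${\rm G}'35$ via ${\rm G}'31$ and G18 to $u\zucc\,\sim u=\,\sim u$ (with $u=\nabla x$), and then the cycle through G16, G5, G7 and G11 to reach ${\rm G}'36$ and ${\rm G}'37$, are exactly the intended moves.

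Your diagnosis of ${\rm G}'38$ is also right: as printed, $\nabla x\vee\sim\nabla x=0$ would force $\nabla x=0$ for every $x$, which is absurd. That this is a misprint for $\nabla x\wedge\sim\nabla x=0$ is confirmed by the paper's own use of ${\rm G}'38$ in the proof of the next theorem, where it is invoked to kill a term of the form $\ldots\wedge\nabla x\wedge\sim\nabla x$. Your De Morgan step via G24 and G19, together with $\sim 1=0$ and $\sim 0=1$, handles the corrected version cleanly.

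One minor remark: your observation that neither ${\rm G}'33$ nor ${\rm G}'34$ is needed is correct, and in fact your proof nowhere uses the ${\rm G}_4$ axiom ${\rm G}'28$ either, so the equivalence already holds in any ${\rm G}^{0}$--algebra (once $\luk$ is read as the abbreviation ${\rm G}'31$). This is harmless for the lemma as stated but worth noting.
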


\begin{rem} 
We do not know if ${\rm G}^{'}35$ holds in any ${\rm G}_{4}^{0}$--algebra.
We shall denote by $\mbox{\boldmath{${\rm DG}_{4}^{0}$}}$ the class of distributive ${\rm G}_{4}^{0}$--algebras.
\end{rem}

\begin{defi} $A\in \mbox{\boldmath{${\rm DG}_{4}^{0}$}}$ is a modal ${\rm G}_{4}^{0}$--algebra (or $\mbox{\boldmath{${\rm MDG}_{4}^{0}$}}$--algebra) if it verifies ${\rm G}^{'}35$.
\end{defi}

\begin{theo}
Let $A\in \mbox{\boldmath{${\rm MDG}_{4}^{0}$}}$  and $\vee ,\sim ,\nabla ,\wedge$   be defined by the formulas {\rm L1,$\ldots$,L4}. Then $\B=\langle A,\vee ,\wedge ,\sim ,\nabla ,1)\in {\bf M_{4}}$ and it satisfies $x\im y=(x \mapsto y)\wedge ((x\cla y)\im (\triangle \sim x\vee y))$, where $\triangle ,\cla ,\to ,\mapsto$  are defined by {\rm M9,$\ldots$,M12} respectively.
\end{theo}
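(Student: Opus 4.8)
The plan is to establish the statement in two stages: that $\B=\langle A,\vee ,\wedge ,\sim ,\nabla ,1\rangle$ lies in \mbox{\boldmath{${\rm M}_{4}$}}, and that the given operation $\zucc$ is recovered from $\B$ through M13. Since \mbox{\boldmath{${\rm MDG}_{4}^{0}$}} is contained in \mbox{\boldmath{${\rm DG}^{0}$}}, Theorem 2.1 already provides that $\langle A,\vee ,\wedge ,\sim ,1\rangle$ is a distributive De Morgan algebra with last element $1$, so that M1, M2, M3, M4 and M7 hold automatically. Thus only the two modal axioms M5 and M6 remain for the first stage.

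For M5 I would work directly with $\zucc$. By ${\rm G}^{'}33$, read through ${\rm G}^{'}31$ and ${\rm G}^{'}32$, one has $\sim x\zucc \nabla x=\nabla x$, whence by L1 and G5
\[
\sim x\vee \nabla x=(\sim x\zucc \nabla x)\zucc \nabla x=\nabla x\zucc \nabla x=1 ,
\]
which is M5; notably this uses neither distributivity nor the modal identity. For M6 I would first rewrite it: applying $\sim$ and using G19 and G26, the identity $\nabla x\wedge \sim x=\,\sim x\wedge x$ is equivalent to $\sim \nabla x\vee x=\,\sim x\vee x$. By L1 the two sides are $(\sim \nabla x\zucc x)\zucc x$ and $(\sim x\zucc x)\zucc x=\nabla x\zucc x$ (the last step by ${\rm G}^{'}32$), so it suffices to prove the key identity $\sim \nabla x\zucc x=\nabla x$. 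Its lower bound $\nabla x\leq \,\sim \nabla x\zucc x$ is immediate from $\sim \nabla x\leq \,\sim x$ (${\rm G}^{'}34$, G20) together with G14. The upper bound $\sim \nabla x\zucc x\leq \nabla x$ is where the modal hypothesis ${\rm G}^{'}35$ is indispensable: through ${\rm G}^{'}37$ and ${\rm G}^{'}38$ it makes $\nabla x$ a complemented element of the De Morgan algebra with complement $\sim \nabla x$, and for such a complemented antecedent one shows, using distributivity, that $\sim \nabla x\zucc v=\nabla x\vee v$ for every $v$; taking $v=x$ and using $x\leq \nabla x$ gives $\sim \nabla x\zucc x=\nabla x$, hence M6.

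For the second stage I would unfold the right--hand side of M13. By M9, M10, M11, M12 and the definitions L1--L4, the term $(x\mapsto y)\wedge ((x\cla y)\im (\triangle \sim x\vee y))$ is a term in $\zucc$ and $0$; denote by $\zucc_{\B}$ the operation it defines on $\B$. Having $\B\in \mbox{\boldmath{${\rm M}_{4}$}}$, I would use Loureiro's representation M8 to embed $\B$ into a power $\T_{4}^{X}$. As $\zucc_{\B}$ is assembled from the \mbox{\boldmath{${\rm M}_{4}$}}--operations it is computed coordinatewise, so $x\zucc y=\zucc_{\B}(x,y)$ need only be verified in $\T_{4}$, where it is the routine comparison of the displayed table of $\zucc$ with the table of $\zucc_{\B}$ obtained from the tables of $\triangle ,\cla ,\im ,\mapsto$ via M13.

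I expect the upper bound of the key identity, $\sim \nabla x\zucc x\leq \nabla x$, to be the main obstacle: it is the one point where ${\rm G}^{'}35$ cannot be dispensed with, and because $\zucc$ is not monotone in its second argument (as the $\T_{4}$ table shows) it cannot be obtained by bare $\zucc$--manipulation but only by combining the complementarity of $\nabla x$ with the distributive law. A subsidiary care point in the second stage is that the reduction to $\T_{4}$ presupposes that the original $\zucc$ is transported by the representing homomorphisms; should this require justification, the identity $x\zucc y=\zucc_{\B}(x,y)$ can instead be derived directly from the G--identities, distributivity and ${\rm G}^{'}35$.
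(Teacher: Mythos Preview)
Your treatment of M5 and of the De Morgan reduct is essentially the paper's: the paper also reduces everything to M5 and M6 via the De Morgan theorem, and proves M5 by exactly the computation $\sim x\vee(\sim x\luk x)=1$ (your version just unwinds ${\rm G}'29$ one step further through G5).

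For M6 the two routes diverge. The paper never returns to $\zucc$: once M5 is in hand it argues entirely in the distributive De Morgan lattice, using only M5, ${\rm G}'34$ ($x\le\nabla x$) and ${\rm G}'38$ ($\nabla x\wedge\sim\nabla x=0$) together with distributivity to pass from $\sim x\wedge\nabla x$ to $\sim x\wedge x$. You instead reformulate M6 as the $\zucc$--identity $\sim\nabla x\zucc x=\nabla x$ and try to prove its upper half via the general principle ``$\sim c\zucc v=c\vee v$ whenever $c$ is complemented''. That principle is true in $\T_4$, but your appeal to ``distributivity'' does not establish it: the only distributive law available is DG1/DG2, namely $(a\wedge b)\zucc z=(a\zucc z)\vee(b\zucc z)$, which controls meets in the \emph{first} argument and says nothing about bounding $\sim c\zucc v$ from above. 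In particular the natural reduction $\sim c\wedge(\sim c\zucc v)\le v$ is a modus--ponens inequality that fails for non--Boolean antecedents in $\T_4$, so one cannot simply invoke it; some additional work (or a direct lattice argument as in the paper) is needed to close this step. The paper's purely lattice--theoretic route avoids having to analyse $\zucc$ with a Boolean first argument at all.

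On the second stage you go further than the paper, which proves only M5 and M6 and leaves the recovery of $\zucc$ via M13 unargued (the companion Theorem invoking M8 handles the converse direction). Your idea of checking the identity in $\T_4$ after Loureiro's embedding is natural, but you are right to flag the difficulty: the embedding is an $\mbox{\boldmath{${\rm M}_{4}$}}$--homomorphism and carries $\vee,\wedge,\sim,\nabla$, not the original $\zucc$, so the coordinatewise reduction is not automatic. Your fallback of deriving $x\zucc y=\zucc_{\B}(x,y)$ directly from the axioms is the honest fix, but it is not carried out; the paper simply does not address this point in the present proof.
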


\begin{proof}
Taking into account the theorem 2.12 , we only need to prove M5 and M6.
\begin{itemize}
\item[(M5)] $\sim x\vee \nabla x$, \hfill [${\rm G}^{'}32$]
\item[] $=\,\sim x\vee (\sim x\zucc x)$, \hfill [${\rm G}^{'}33$]
\item[] $=\,\sim x\vee (\sim x\luk x)$, \hfill [${\rm G}^{'}29$] 
\item[] $=1$.

\item[(M6)] $\sim x\wedge \nabla x=(\sim x\wedge \nabla x)\vee 0=(\sim x\wedge \nabla  x)\,\wedge \sim 1$, \hfill [M5]
\item[] $=(\sim x\wedge \nabla x)\,\vee \sim (\sim x\vee \nabla x)$, \hfill [G25,G19]
\item[] $=(\sim x\wedge \nabla x)\vee (x\,\wedge \sim \nabla x)$,
\item[] $=(\sim x\wedge \nabla x\wedge x)\vee (\sim x\wedge \nabla x\,\wedge \sim \nabla x)$, \hfill [${\rm G}^{'}34$,${\rm G}^{'}38$]
\item[] $=\,\sim x\wedge x$.
\end{itemize}
\end{proof}

The converse of this results follows inmediately of the representation theorem M8 of I. Loureiro.

\begin{theo} Let $\langle A,\vee ,\wedge ,\sim ,\nabla ,1\rangle \in {\bf M_{4}}$. If we define $\succ$  by means of {\rm M13}, then $\langle A,\succ  ,1\rangle \in \mbox{\boldmath{${\rm MDG}_{4}^{0}$}}$, where $0=\,\sim 1$, and it satisfies {\rm L1,$\ldots$,L4}.
\end{theo}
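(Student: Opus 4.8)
The plan is to show that starting from an arbitrary $M_4$--algebra $\langle A,\vee,\wedge,\sim,\nabla,1\rangle$ and defining $x\succ y$ by M13, the resulting structure $\langle A,\succ,1\rangle$ lands in $\mbox{\boldmath{${\rm MDG}_4^0$}}$, and moreover recovers the original operations via L1--L4. The cleanest route exploits the representation theorem M8: since every non--trivial $M_4$--algebra embeds into a power $\T_4^X$, and since all the identities to be verified (G1--G4, ${\rm G}^{'}28$, DG1, ${\rm G}^{'}35$, and L1--L4) are equational, it suffices to check them componentwise, i.e.\ in $\T_4$ itself. First I would observe that $\succ$ on $\T_4$ is given by the explicit table already displayed in the excerpt, so verifying each required identity reduces to a finite computation on the four elements $\{0,a,b,1\}$.

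Concretely, I would proceed as follows. First, confirm that the $\succ$--table on $\T_4$ satisfies the G--algebra axioms G1--G4; G1 and G2 are immediate from the first row/column of the table, G3 (the exchange identity $(x\succ y)\succ y=(y\succ x)\succ x$) is a symmetric quantity best checked by noting it equals $x\vee y$ via L1, and G4 is the order--theoretic quasi--identity which follows once $\leq$ is identified with the $M_4$--order. Next I would verify ${\rm G}^{'}28$ on $\T_4$, so that the algebra is a ${\rm G}_4^0$--algebra, and then DG1 to obtain distributivity; both are routine finite checks on the table. Then ${\rm G}^{'}35$, which makes the algebra modal, is checked against the $\nabla$ column.

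The conceptual heart of the argument is establishing L1--L4, because these identities are exactly what guarantee that the derived operations $\vee,\wedge,\sim,\nabla$ (defined from $\succ$ and $0=\sim 1$ through L1--L4) coincide with the original $M_4$--operations one started with. For this I would verify directly on $\T_4$ that $(x\succ y)\succ y=x\vee y$ (L1), that $x\succ 0=\sim x$ (L2), that $\sim x\succ x=\nabla x$ (L3), and that $\sim(\sim x\vee\sim y)=x\wedge y$ (L4), reading each side off the tables for $\succ$, $\sim$, $\nabla$ and the diagram of figure~1. Once these hold in $\T_4$, they hold in $\T_4^X$ pointwise and hence in every subalgebra, so in particular in $A$; this simultaneously shows that the $\succ$--reduct determines $A$ completely.

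The main obstacle I anticipate is not any single identity but the bookkeeping of the quasi--identity G4 together with the interplay between the two candidate bottom elements: one must check that $0=\sim 1$ really is the least element for the induced order $\leq$ (i.e.\ G17 holds), and that the $\sim$ recovered by L2 agrees with the given $\sim$ rather than merely being an involution. I would handle this by first pinning down that $x\succ y=1$ iff $x\leq y$ in the $M_4$--order on $\T_4$ (inspecting where $1$ appears in the $\succ$--table), which makes G4 and G17 transparent and aligns the two orders. With the order identified, the remaining verifications are finite and mechanical, and the closing sentence ``and it satisfies L1--L4'' is precisely what lets one conclude the construction is the inverse of that in Theorem~2.9, completing the characterization promised in Problem~2.
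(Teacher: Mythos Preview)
Your approach is essentially the paper's own: the paper simply states that the result ``follows immediately of the representation theorem M8 of I. Loureiro,'' and your proposal spells out exactly how M8 reduces the verification to the four-element algebra $\T_4$. Your observation that G4 is a quasi-identity rather than an equation, and hence needs the fact that quasi-identities are preserved under subalgebras and products, is a worthwhile clarification that the paper's one-line proof leaves implicit.
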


\subsection*{\large \bf Final Conclusion}

From the above results it follows that an axiomatization for the $4$--valued modal algebras defined by the means of the operations $\{\succ ,\sim ,1\}$ is:
\begin{itemize}
\item[(C1)] $1\zucc x=x$,
\item[(C2)] $x\zucc 1=1$,
\item[(C3)] $(x\zucc y)\zucc y=(y\zucc x)\zucc x$,
\item[(C4)] If $x\zucc (y\zucc z)=1$, then $y\zucc (x\zucc z)=1$,
\item[(C5)] $((x\zucc (x\zucc y))\zucc x)\zucc x=1$,
\item[(C6)] $\sim 1\zucc x=1$,
\item[(C7)] $x\succ \,\sim 1=\,\sim x$,
\item[(C8)] $((x\,\vee \sim y)\zucc z)\zucc ((x\zucc z)\wedge  (y\zucc z))=1$,
\end{itemize}
where $a\vee b$ denotes $(a\zucc b)\zucc b$.

\rr We have named modal algebras to the algebras which satisfy ${\rm G}^{'}35$ because it is in this variety where the operator defined by ${\rm G}^{'}33$ has the modal properties M5 and M6. 

On the other hand, for all $A\in {\bf G}$ we can define the operators $\Rightarrow_{i}$, $i=0,1,\ldots$  by means of the formulas $x\Rightarrow_{0}y=y$, \,$x\Rightarrow_{i+1}y=x\zucc (x\Rightarrow_{i}y)$.
Then we say that $A\in {\bf G}$ is $(n+1)$--valued (or $G_{n+1}$--algebra) if it satisfies the identity:

\begin{itemize}
\item[] $(x\Rightarrow_{n}y)\vee x=1$.
\end{itemize}

We believe that ${\bf G}_{n+1}$--algebras are an interesting generalization of the class of $(n+1)$--valued $I$--algebras. This terminology is analogous to the $(n+1)$--valued Wajsberg algebras, and taking it into account we think that it is more appropiated to call $3$--valued modal algebras to the $4$--valued modal algebras.

\

\begin{minipage}{9cm}
\noindent Aldo V. Figallo\\
Departamento de Matem\'atica\\
Universidad Nacional del Sur\\
8000 Bah\'{\i}a Blanca, Argentina.\\
{\it e-mail: afigallo@criba.edu.ar}
\end{minipage}
\begin{minipage}{7cm}
\vspace{-6mm}
\noindent Paolo Landini\\
Instituto de Ciencias B\'asicas\\
Universidad Nacional de  San Juan\\
5400 San Juan, Argentina.
\end{minipage}

\end{document}